\theoremstyle{plain}
\newtheorem{thm}{Theorem}[section]
\newtheorem{cor}[thm]{Corollary}
\newtheorem{prop}[thm]{Proposition}
\theoremstyle{definition}
\theoremstyle{remark}
\newcommand{\lemref}[1]{\hyperref[#1]{Lemma \ref*{#1}}}
\newcommand{\thmref}[1]{\hyperref[#1]{Theorem \ref*{#1}}}
\newcommand{\propref}[1]{\hyperref[#1]{Proposition \ref*{#1}}}
\newcommand{\corref}[1]{\hyperref[#1]{Corollary \ref*{#1}}}
\newcommand{\defref}[1]{\hyperref[#1]{Definition \ref*{#1}}}
\newcommand{\remref}[1]{\hyperref[#1]{Remark \ref*{#1}}}
\newcommand{\conjref}[1]{\hyperref[#1]{Conjecture \ref*{#1}}}
\newcommand*{\defeq}{\mathrel{\rlap{%
                     \raisebox{0.27ex}{$\m@th\cdot$}}%
                     \raisebox{-0.27ex}{$\m@th\cdot$}}%
                     =}
\numberwithin{equation}{section}
\def\@setcopyright{}
\def\serieslogo@{}
\title{Hecke's Theorem on the Different for $3$-Manifolds}
\author{Will Sawin}\thanks{W.S. served as a Clay Research Fellow while working on this paper.}
\author{Mark Shusterman}
\address{Department of Mathematics, Columbia University, New York, NY 10027, USA}
\email{sawin@math.columbia.edu}
\address{Department of Mathematics, Harvard University, 1 Oxford Street, Cambridge, MA 02138, USA}
\email{mshusterman@math.harvard.edu}
\begin{document}

\begin{abstract}

Hecke has shown that the different of an extension of number fields is a square in the class group.
We prove an analog for branched covers of closed $3$-manifolds saying that the branch divisor is a square in the first homology group.

\end{abstract}

\maketitle

\section{Introduction}

Let $E/F$ be an extension of number fields, let $\mathcal O_E$ be the ring of integers of $E$, and let $\operatorname{Cl}(\mathcal O_E)$ be the class group of $\mathcal O_E$.
One associates to the extension $E/F$ the different $\mathcal D_{E/F}$, an ideal in $\mathcal O_E$, see \cite[Chapter 3]{Serre}. 
Hecke has shown that as an element of $\operatorname{Cl}(\mathcal O_E)$, the different $\mathcal D_{E/F}$ is a square, namely there exists an ideal class $J \in \operatorname{Cl}(\mathcal O_E)$ such that $J^2 = \mathcal D_{E/F}$ in $\operatorname{Cl}(\mathcal O_E)$.
Hecke's proof uses a reciprocity formula for Gauss sums, see \cite{Arm} and \cite{Fro} for a proof and a discussion of related results.

An analog of Hecke's theorem for finite separable extensions of fields of fractions of Dedekind domains fails in general, see \cite{FST}. 
However, there exists an analog in case $E/F$ is a finite separable extension of function fields of curves over finite fields of odd characteristic, see \cite{Arm}.
Another geometric analog of Hecke's theorem, based on similarities between the inverse of the different and the canonical bundle on a curve, is the theory of theta characteristics.

In this work we consider an analog of Hecke's theorem for $3$-manifolds, as suggested by arithmetic topology. We refer to \cite{Mori} for the analogy between rings of integers and primes on the one hand, and $3$-manifolds and knots on the other hand.
The analog of $\operatorname{Spec}(\mathcal O_F)$ is a closed (not necessarily oriented) $3$-manifold $M$.
The map $\operatorname{Spec}(\mathcal O_E) \to \operatorname{Spec}(\mathcal O_F)$ is replaced by a cover $\pi \colon \widetilde{M} \to M$ branched over a link $L \subset M$,
so $\widetilde{M}$ is a closed $3$-manifold and $\pi^{-1}(M \setminus L)$ is a covering space of $M \setminus L$. 
The inverse image of $L$ under $\pi$ is a link $\widetilde{L}$ in $\widetilde{M}$. 

For a prime ideal $\mathfrak{p}$ of $\mathcal O_E$ we denote by $e_{\mathfrak{p}}$ its ramification index, namely the largest positive integer $e$ for which $\mathfrak p^e$ contains $\mathfrak p \cap \mathcal O_F$. We view $\operatorname{Spec}(\mathcal O_E) \to \operatorname{Spec}(\mathcal O_F)$ as branched over the primes of $\mathcal O_E$ that ramify, so $\widetilde L$ is our analog for $\mathcal R_{E/F} = \{\mathfrak p \in \operatorname{Spec}(\mathcal O_E) : e_{\mathfrak p} > 1 \}$.
The analogy is perhaps closest in case $\operatorname{Spec}(\mathcal O_E) \to \operatorname{Spec}(\mathcal O_F)$ is tamely ramified, namely $e_{\mathfrak p}$ is coprime to $|\mathcal O_E/\mathfrak p|$ for every $\mathfrak p \in \operatorname{Spec}(\mathcal O_E)$.
In this case the different of $E/F$ is given by
\[
\mathcal D_{E/F} = \prod_{\mathfrak p \in \mathcal R_{E/F}} \mathfrak p^{e_{\mathfrak p} - 1}.
\]

The prime ideals in $\mathcal R_{E/F}$ are analogous to the components of the link $\widetilde L$.
For each component $\widetilde{K}$ of this link, let the ramification index $e_{\widetilde{K}}$ be the number of times the image under $\pi$ of a small loop around $\widetilde{K}$ wraps around $\pi(\widetilde K)$. An analog of $\operatorname{Cl}(\mathcal O_E)$ is $H_1(\widetilde M, \mathbb Z)$, and a homology class is a square if and only if its image in
\[
H_1(\widetilde M, \mathbb Z) \otimes_{\mathbb Z} \mathbb Z/ 2 \mathbb Z \cong H_1(\widetilde M, \mathbb Z/2 \mathbb Z)
\]
vanishes.
Our analogy of $\mathcal D_{E/F}$, or rather of its class in $\operatorname{Cl}(\mathcal O_E)/\operatorname{Cl}(\mathcal O_E)^2$, is the branch divisor 
\[
\mathcal D_\pi = \sum_{ \widetilde{K} \ \text{a component of} \ \widetilde{L}} (e_{\widetilde{K}}-1)  [\widetilde{K}] \in H_1(\widetilde M, \mathbb Z/2 \mathbb Z)
\]
of $\pi$.
Since we are working with $\mathbb Z / 2\mathbb Z$-coefficients, it is not necessary to fix an orientation of $\widetilde{K}$, nor is the sign of $e_{\widetilde{K}}$ significant.

\begin{thm} \label{MainRes}

Let $\widetilde M$ and $M$ be closed $3$-manifolds, and let $\pi \colon \widetilde M \to M$ be a cover branched over a link in $M$.
Then the branch divisor $\mathcal D_\pi$ represents the trivial class in $H_1(\widetilde M, \mathbb Z/2\mathbb Z)$. 

\end{thm}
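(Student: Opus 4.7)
My plan proceeds in three steps, using Poincar\'e duality with $\mathbb{Z}/2$-coefficients, a triviality argument for a derivative line bundle, and a Riemann--Hurwitz--type identification of $\mathcal{D}_\pi$ with a difference of Stiefel--Whitney classes.

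Step 1. Since Poincar\'e duality with $\mathbb{Z}/2$-coefficients holds for any closed $3$-manifold regardless of orientability, it suffices to show that the Poincar\'e dual of $\mathcal{D}_\pi$ vanishes in $H^2(\widetilde M;\mathbb{Z}/2)$. This dual class is supported on $\widetilde L$, and under the Thom isomorphism $H^2_{\widetilde L}(\widetilde M;\mathbb{Z}/2) \cong H^0(\widetilde L;\mathbb{Z}/2)$ it corresponds to $(e_{\widetilde K_i}-1 \bmod 2)_i$.

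Step 2. The derivative $d\pi\colon T\widetilde M \to \pi^*TM$ is an isomorphism on $\widetilde M \setminus \widetilde L$, so $\det d\pi$ trivializes the real line bundle $V := (\det T\widetilde M)^{\vee} \otimes \pi^*\det TM$ there. Because $\widetilde L$ has codimension $2$, the Thom isomorphism gives $H^1(\widetilde M,\widetilde M\setminus\widetilde L;\mathbb{Z}/2) \cong H^{-1}(\widetilde L;\mathbb{Z}/2) = 0$, so the restriction $H^1(\widetilde M;\mathbb{Z}/2) \to H^1(\widetilde M\setminus \widetilde L;\mathbb{Z}/2)$ is injective. Hence $w_1(V) = w_1(\widetilde M) + \pi^*w_1(M) = 0$, i.e., $w_1(\widetilde M) = \pi^*w_1(M)$ in $H^1(\widetilde M;\mathbb{Z}/2)$.

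Step 3 (the heart of the argument). Perform a Riemann--Hurwitz--type calculation in Stiefel--Whitney classes to identify the Poincar\'e dual of $\mathcal{D}_\pi$ with $w_2(T\widetilde M) + \pi^*w_2(TM)$ in $H^2(\widetilde M;\mathbb{Z}/2)$. The strategy is to interpret both sides as Thom pushforwards from $\widetilde L$ and match them via a local calculation in the normal form $(z,t)\mapsto (z^{e_{\widetilde K}},t)$ near each $\widetilde K$. Once the formula is established, Wu's formula $w_2 = w_1^2$ for closed $3$-manifolds together with Step 2 gives
\[
w_2(\widetilde M) + \pi^*w_2(M) = w_1(\widetilde M)^2 + (\pi^* w_1(M))^2 = (w_1(\widetilde M) + \pi^*w_1(M))^2 = 0,
\]
and the theorem follows.

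The main obstacle is Step 3: establishing the Riemann--Hurwitz identity $[\mathcal{D}_\pi]^{\vee} = w_2(\widetilde M) + \pi^*w_2(M)$ in the topological (as opposed to complex-algebraic) setting. The local input is the normal form of $\pi$, but globally one must carefully interpret the short exact sequence $0 \to \ker d\pi \to T\widetilde M \to \pi^*TM \to \mathrm{coker}\,d\pi \to 0$ of sheaves on $\widetilde M$ (exact off $\widetilde L$, with cokernel supported on $\widetilde L$) in Stiefel--Whitney class terms. The alternative is to exhibit an explicit mod-$2$ bounding $2$-chain for $\mathcal{D}_\pi$, constructed from level sets of $\det d\pi$ regarded---via Step 2---as a non-negative real function on $\widetilde M$; the challenge there is patching local half-disk bounding chains near each even-ramification component into a coherent global surface.
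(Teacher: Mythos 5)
Your proposal is a genuinely different route from the paper's (which never touches tangent bundles: it pairs $\mathcal D_\pi$ against an arbitrary class $\alpha \in H^1(\widetilde M,\mathbb Z/2\mathbb Z)$, encodes the composite cover by monodromy in the hyperoctahedral group $B_n$, and evaluates an explicit central-extension class $\beta_n \in H^2(B_n,\mathbb Z/2\mathbb Z)$ on the boundary tori of a tubular neighborhood of $L$). But as written your argument has a genuine gap: Step 3 is not a proof, it is a statement of what would need to be proved, and given Wu's formula $w_2=w_1^2$ together with your Step 2, the identity $\mathrm{PD}[\mathcal D_\pi]=w_2(\widetilde M)+\pi^*w_2(M)$ is essentially \emph{equivalent} to the theorem. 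So the entire content of the result has been deferred to the step you explicitly flag as an obstacle. The specific mechanism you gesture at does not exist off the shelf: there is no Whitney-formula calculus of Stiefel--Whitney classes for the ``cokernel sheaf'' of $d\pi$ in the smooth real category (unlike Chern classes of coherent sheaves in the holomorphic setting). To make Step 3 work you would need (i) a difference-class construction lifting $w_2(T\widetilde M)+\pi^*w_2(TM)$ canonically to $H^2(\widetilde M,\widetilde M\setminus\widetilde L;\mathbb Z/2\mathbb Z)\cong H^0(\widetilde L;\mathbb Z/2\mathbb Z)$ using the isomorphism $d\pi$ off $\widetilde L$, and (ii) a local computation showing the contribution at $\widetilde K$ is $e_{\widetilde K}-1 \bmod 2$ (the class of the loop $z\mapsto e z^{e-1}$ in $\pi_1(SO(3))=\mathbb Z/2\mathbb Z$), including the case where the normal bundle of $\widetilde K$ is non-orientable. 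Neither is routine, and neither is supplied.

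There is also a hypothesis mismatch: the theorem is stated for a purely topological branched cover ($\pi^{-1}(M\setminus L)\to M\setminus L$ is merely a covering space), so $T\widetilde M$, $d\pi$, and the smooth normal form $(z,t)\mapsto(z^{e},t)$ that your local calculation relies on are not part of the data. You would first have to smooth $\widetilde M$, $M$, and $\pi$ compatibly (possible in dimension $3$, but requiring an argument), whereas the paper's group-cohomological proof consumes only the covering-space data on $M\setminus L$ and the combinatorics of meridian/longitude monodromies on the boundary tori. Steps 1 and 2 of your plan are fine (and Step 2 can even be done without smoothness, since $w_1$ is the orientation character and restriction $H^1(\widetilde M;\mathbb Z/2\mathbb Z)\to H^1(\widetilde M\setminus\widetilde L;\mathbb Z/2\mathbb Z)$ is injective), but the proof is not complete without Step 3 being carried out in full.
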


\section{A central extension of the hyperoctahedral group}

Let $n$ be a positive integer, and let $S_n$ be the symmetric group.
Recall the hyperoctahedral group
\[
B_n = (\mathbb Z/2 \mathbb Z)^n \rtimes S_n
\]
where $S_n$ acts on $(\mathbb Z / 2 \mathbb Z)^n$ by permuting the coordinates.

Let $H_n$ be the group consisting of pairs $(a,b) \in (\mathbb Z/ 2 \mathbb Z)^n \times \mathbb Z/2 \mathbb Z $ with group law 
\[(a_1, b_1) (a_2,b_2) = (a_1+a_2, b_1+ b_2 +  \sum_{1 \leq i< j \leq n }  a_{1,i} a_{2,j}).\] 
A straightforward computation shows that this law is associative, and that the inverse of $(a,b)$ is
\[
(a, b + \sum_{1 \leq i< j \leq n }  a_{i} a_{j}).
\]
Projection onto the first factor exhibits $H_n$ as a central extension of $(\mathbb Z/2 \mathbb Z)^n $ by $\mathbb Z/2 \mathbb Z$.  

For $1 \leq i \leq n$ we donte by $e_i$ the $i$th unit vector in $(\mathbb Z/2 \mathbb Z)^n$,
set $x_i = (e_i, 0) \in H_n$, and $\epsilon = (0,1) \in H_n$.  
We denote the unit element $(0,0) \in H_n$ by $1$.
We can check that 
\begin{equation} \label{OneRelation}
x_i^2 = \epsilon^2 = 1, \quad 1 \leq i \leq n,
\end{equation}  
that
\begin{equation} \label{TwoRelation}
x_i x_j = \epsilon x_j x_i, \quad 1\leq i,j \leq n, \ i \neq j,
\end{equation}
and that
\begin{equation} \label{ThreeRelation}
\epsilon x_i = x_i \epsilon, \quad 1 \leq i \leq n.
\end{equation}

Furthermore, the relations in \cref{OneRelation}, \cref{TwoRelation}, and \cref{ThreeRelation} among the generators $x_1, \dots, x_n, \epsilon$ define the group $H_n$ since using these relations every word in $x_1, \dots, x_n, \epsilon$ can be brought to the form $x_{i_1} \cdots x_{i_k} \epsilon^\delta$ with $1 \leq i_1 < i_2 < \dots < i_k \leq n$ and $\delta \in \{0,1\}$.

We therefore have an action of $S_n$ on $H_n$ by automorphisms via
\[
\sigma (x_i) = x_{\sigma(i)}, \ \sigma(\epsilon) = \epsilon, \quad \sigma \in S_n, \ 1 \leq i \leq n.
\]
Let $G_n =  H_n \rtimes S_n$ be the semidirect product defined using this action. 
Since $\epsilon \in H_n$ is central and $S_n$-invariant, it lies in the center of $G_n$, so 
\[
G_n / \langle \epsilon \rangle = G_n/\{1, \epsilon\} \cong (\mathbb Z/2\mathbb Z)^n \rtimes S_n = B_n. 
\]
We see that $G_n$ is a central extension of $B_n$ by $\mathbb Z/2 \mathbb Z$. 
We denote by $\beta_n$ the class in $H^2( B_n, \mathbb Z/2 \mathbb Z)$ corresponding to this extension.

Let $\sigma, \tau \in B_n$ be two elements that commute, let $\widetilde \sigma, \widetilde \tau$ be lifts to $G_n$, and define
\[
\phi(\sigma, \tau) = [\widetilde \sigma, \widetilde \tau] = \widetilde \sigma \widetilde \tau {\widetilde \sigma}^{-1} {\widetilde \tau}^{-1} \in \langle \epsilon \rangle \cong \mathbb Z/ 2 \mathbb Z.
\]
Since $G_n$ is a central extension of $B_n$, the above is indeed independent of the choice of lifts.
As every element in $\mathbb Z/ 2 \mathbb Z$ is its own inverse, we see that
\begin{equation} \label{SymmetryPhi}
\phi(\sigma, \tau) = [\widetilde \sigma, \widetilde \tau] = [\widetilde \tau, \widetilde \sigma]^{-1} = [\widetilde \tau, \widetilde \sigma] = \phi(\tau, \sigma).
\end{equation}
We denote by
\[
C_{B_n}(\sigma) = \{\tau \in B_n : \sigma \tau = \tau \sigma\}
\]
the centralizer of $\sigma$ in $B_n$.

\begin{prop} \label{HomProp}

For every $\sigma \in B_n$ the map that sends $\tau \in C_{B_n}(\sigma)$ to $\phi(\sigma, \tau)$ is a homomorphism.

\end{prop}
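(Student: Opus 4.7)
The plan is to verify directly that $\phi(\sigma, \tau_1 \tau_2) = \phi(\sigma, \tau_1) \cdot \phi(\sigma, \tau_2)$ for any $\tau_1, \tau_2 \in C_{B_n}(\sigma)$. Since $C_{B_n}(\sigma)$ is a subgroup and the target $\langle \epsilon \rangle \cong \mathbb{Z}/2\mathbb{Z}$ is abelian, this is the only thing that needs checking.

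Pick lifts $\widetilde{\sigma}, \widetilde{\tau}_1, \widetilde{\tau}_2 \in G_n$ of $\sigma, \tau_1, \tau_2$. Then $\widetilde{\tau}_1 \widetilde{\tau}_2$ is a lift of $\tau_1 \tau_2$, and I would apply the standard commutator identity
\[
[\widetilde{\sigma}, \widetilde{\tau}_1 \widetilde{\tau}_2] = [\widetilde{\sigma}, \widetilde{\tau}_1] \cdot \widetilde{\tau}_1 [\widetilde{\sigma}, \widetilde{\tau}_2] \widetilde{\tau}_1^{-1},
\]
valid in any group (quick check: expand both sides). Because $\tau_2$ commutes with $\sigma$ in $B_n$, the commutator $[\widetilde{\sigma}, \widetilde{\tau}_2]$ projects to $1$ in $B_n$, hence lies in the kernel $\langle \epsilon \rangle$. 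Since $\epsilon$ is central in $G_n$, conjugation by $\widetilde{\tau}_1$ acts trivially on it, so the right-hand side collapses to $[\widetilde{\sigma}, \widetilde{\tau}_1] \cdot [\widetilde{\sigma}, \widetilde{\tau}_2]$, giving the desired multiplicativity.

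There isn't really a hard step here — this is just the familiar fact that in a central extension $1 \to Z \to \widetilde{G} \to G \to 1$, the commutator pairing descends to a bimultiplicative map on commuting pairs with values in $Z$. The only things one uses are independence of the lift (already established in the excerpt) and centrality of $\epsilon$, both of which are built into the construction of $G_n$.
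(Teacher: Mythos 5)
Your proof is correct and is essentially the paper's argument: both reduce the claim to the fact that $[\widetilde\sigma,\widetilde\tau_2]$ lies in the central subgroup $\langle\epsilon\rangle$ and hence commutes with $\widetilde\tau_1$, you via the standard identity $[\widetilde\sigma,\widetilde\tau_1\widetilde\tau_2]=[\widetilde\sigma,\widetilde\tau_1]\cdot\widetilde\tau_1[\widetilde\sigma,\widetilde\tau_2]\widetilde\tau_1^{-1}$ and the paper by direct expansion and cancellation. No gaps.
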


\begin{proof}

For every $\tau_1, \tau_2 \in C_{B_n}(\sigma)$ we have
\[
\phi(\sigma, \tau_1\tau_2) = \widetilde \sigma \widetilde {\tau_1} \widetilde {\tau_2} {\widetilde {\sigma}}^{-1} {\widetilde {\tau_2}}^{-1} {\widetilde {\tau_1}}^{-1}, \quad
\phi(\sigma, \tau_1) \phi(\sigma, \tau_2) = \widetilde \sigma \widetilde {\tau_1} {\widetilde \sigma}^{-1} {\widetilde {\tau_1}}^{-1} [\widetilde \sigma, \widetilde {\tau_2}]
\]
so after cancelling $\widetilde \sigma \widetilde {\tau_1}$, it remains to check that
\[
\widetilde {\tau_2} {\widetilde {\sigma}}^{-1} {\widetilde {\tau_2}}^{-1} {\widetilde {\tau_1}}^{-1} =
{\widetilde \sigma}^{-1} {\widetilde {\tau_1}}^{-1} [\widetilde \sigma, \widetilde {\tau_2}].
\]
After multiplying by $\widetilde \sigma$ from the left, we just need to check that $[\widetilde \sigma, \widetilde {\tau_2}]$ commutes with $\widetilde {\tau_1}$.
This is indeed the case because $[\widetilde \sigma, \widetilde {\tau_2}]$ lies in the central subgroup $\{1, \epsilon\}$ of $G_n$.
\end{proof}

\begin{cor} \label{HomCor}

For every $\tau \in B_n$ the map that sends $\sigma \in C_{B_n}(\tau)$ to $\phi(\sigma, \tau)$ is a homomorphism.

\end{cor}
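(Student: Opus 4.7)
The plan is to deduce this directly from \propref{HomProp} using the symmetry property \eqref{SymmetryPhi}. The two statements are dual: \propref{HomProp} fixes the first argument of $\phi$ and varies the second, while \corref{HomCor} fixes the second and varies the first. Since $\phi$ is symmetric in its arguments by \eqref{SymmetryPhi}, and the centralizer relation itself is symmetric ($\sigma \in C_{B_n}(\tau)$ iff $\tau \in C_{B_n}(\sigma)$), the corollary should fall out by simply interchanging the roles of $\sigma$ and $\tau$.

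Concretely, I would fix $\tau \in B_n$ and $\sigma_1, \sigma_2 \in C_{B_n}(\tau)$, note that $\sigma_1 \sigma_2 \in C_{B_n}(\tau)$ since the centralizer is a subgroup, and then compute
\[
\phi(\sigma_1 \sigma_2, \tau) = \phi(\tau, \sigma_1 \sigma_2) = \phi(\tau, \sigma_1)\, \phi(\tau, \sigma_2) = \phi(\sigma_1, \tau)\, \phi(\sigma_2, \tau),
\]
where the first and third equalities use \eqref{SymmetryPhi}, and the middle equality is an application of \propref{HomProp} with $\tau$ playing the role of the fixed element and $\sigma_1, \sigma_2 \in C_{B_n}(\tau)$ playing the role of the varying ones. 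There is no real obstacle here: the only thing to verify is that the symmetric centralizer condition allows the substitution, which it does immediately.
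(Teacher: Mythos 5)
Your proof is correct and is essentially identical to the paper's: both deduce the corollary from \propref{HomProp} via the symmetry \eqref{SymmetryPhi} by the chain $\phi(\sigma_1\sigma_2,\tau)=\phi(\tau,\sigma_1\sigma_2)=\phi(\tau,\sigma_1)\phi(\tau,\sigma_2)=\phi(\sigma_1,\tau)\phi(\sigma_2,\tau)$.
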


\begin{proof}

For $\sigma_1, \sigma_2 \in C_{B_n}(\tau)$ we get from \cref{SymmetryPhi} and \cref{HomProp} that
\[
\phi(\sigma_1 \sigma_2, \tau) = \phi(\tau, \sigma_1 \sigma_2) =  \phi(\tau, \sigma_1)  \phi(\tau, \sigma_2) = \phi(\sigma_1, \tau) \phi(\sigma_2, \tau).
\]
as required.
\end{proof}

\begin{prop} \label{PhiForAcycle}

For a $k$-cycle $\sigma = (i_1 \dots i_k) \in S_n \leq B_n$, and 
\[
\tau = e_{i_1} + \dots + e_{i_k} \in (\mathbb Z/ 2 \mathbb Z)^n \leq B_n
\]
we have $\phi(\sigma, \tau) = \epsilon^{k-1}$.
For every $\alpha \in S_n \leq B_n$ with $\alpha(i_1) = i_1, \dots, \alpha(i_k) = i_k$ we have $\phi(\alpha,\tau) = 1$.

\end{prop}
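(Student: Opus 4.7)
The plan is to work directly in $G_n$, using the explicit semidirect product structure together with the defining relations \cref{OneRelation}--\cref{ThreeRelation}.

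First I would choose convenient lifts. Lift $\sigma$ to itself via the inclusion $S_n \leq G_n = H_n \rtimes S_n$ (so $\widetilde\sigma = \sigma$). Lift $\tau = e_{i_1} + \dots + e_{i_k}$ to $\widetilde\tau = x_{i_1} x_{i_2} \cdots x_{i_k} \in H_n$. These choices are legitimate since $\tau \in (\mathbb Z/2\mathbb Z)^n \leq B_n$ is by definition the image of $\widetilde\tau$ under the projection $G_n \to B_n$.

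Next I would compute $\widetilde\sigma \widetilde\tau \widetilde\sigma^{-1}$ using the defining action of $S_n$ on $H_n$. Since $\sigma(x_j) = x_{\sigma(j)}$, and since $\sigma$ is the $k$-cycle $(i_1 \dots i_k)$, we get
\[
\sigma \widetilde\tau \sigma^{-1} = x_{\sigma(i_1)} x_{\sigma(i_2)} \cdots x_{\sigma(i_k)} = x_{i_2} x_{i_3} \cdots x_{i_k} x_{i_1}.
\]
Now using \cref{TwoRelation} to move $x_{i_1}$ leftward past each of $x_{i_k}, x_{i_{k-1}}, \dots, x_{i_2}$, each transposition contributes one factor of $\epsilon$ (which is central by \cref{ThreeRelation}), so after the $k-1$ swaps we obtain
\[
\sigma \widetilde\tau \sigma^{-1} = \epsilon^{k-1} x_{i_1} x_{i_2} \cdots x_{i_k} = \epsilon^{k-1} \widetilde\tau.
\]
Multiplying on the right by $\widetilde\tau^{-1}$ gives $\phi(\sigma,\tau) = \epsilon^{k-1}$, as desired.

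For the second assertion, if $\alpha \in S_n$ fixes each of $i_1,\dots,i_k$, then by the same computation as above,
\[
\alpha \widetilde\tau \alpha^{-1} = x_{\alpha(i_1)} \cdots x_{\alpha(i_k)} = x_{i_1} \cdots x_{i_k} = \widetilde\tau,
\]
so $[\alpha,\widetilde\tau] = 1$ and therefore $\phi(\alpha,\tau) = 1$.

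The computation is essentially mechanical; the only mild subtlety is bookkeeping the $k-1$ copies of $\epsilon$ picked up when reversing the cyclic shift, and making sure the indices $i_1,\dots,i_k$ are distinct (which holds because $\sigma$ is a $k$-cycle) so that the $\epsilon$-commutation relation \cref{TwoRelation} actually applies at each step.
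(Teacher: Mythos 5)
Your proof is correct and follows essentially the same route as the paper: the same choice of lifts $\widetilde\sigma = \sigma$ and $\widetilde\tau = x_{i_1}\cdots x_{i_k}$, the same conjugation computation via the $S_n$-action, and the same count of $k-1$ factors of $\epsilon$ when cyclically shifting $x_{i_1}$ back to the front. Nothing to add.
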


\begin{proof}

We take $\widetilde \sigma = (i_1 \dots i_k)$, $\widetilde \tau = x_{i_1} \cdots x_{i_k}$ and get that
\begin{equation*}
\begin{split}
\phi(\sigma, \tau) &= \widetilde \sigma \widetilde \tau {\widetilde \sigma}^{-1} \cdot {\widetilde \tau}^{-1} = 
\sigma(x_{i_1} \cdots x_{i_k}) \cdot (x_{i_1} \cdots x_{i_k})^{-1} =
x_{\sigma(i_1)} \cdots x_{\sigma(i_k)} \cdot (x_{i_1} \cdots x_{i_k})^{-1} \\
&= x_{i_2} \cdots x_{i_k} x_{i_1} \cdot (x_{i_1} \cdots x_{i_k})^{-1} =
\epsilon^{k-1} x_{i_1} \cdots x_{i_k} \cdot (x_{i_1} \cdots x_{i_k})^{-1}  = \epsilon^{k-1}.
\end{split}
\end{equation*}
Taking $\widetilde \alpha = \alpha$ we see that
\[
\phi(\alpha, \tau) = \widetilde \alpha \widetilde \tau {\widetilde \alpha}^{-1} \cdot {\widetilde \tau}^{-1} = 
\alpha(x_{i_1} \cdots x_{i_k}) \cdot (x_{i_1} \cdots x_{i_k})^{-1} =
x_{\alpha(i_1)} \cdots x_{\alpha(i_k)} \cdot (x_{i_1} \cdots x_{i_k})^{-1} = 1.
\]
as claimed.
\end{proof}

\begin{cor} \label{PhiSigmaComputCor}

Let $\sigma \in S_n \leq B_n$ whose disjoint cycles are 
\[
C_1 = (i_{1,1} \dots i_{1, d_1} ), \dots, C_j  = (i_{j,1} \dots i_{j, d_j}), \quad \sum_{r=1}^j d_r = n,
\]
and let $\tau \in C_{B_n}(\sigma)$. 
Then there exists a (unique) choice of $\tau' \in C_{S_n}(\sigma)$ and $\lambda_1, \dots, \lambda_j \in \mathbb Z / 2 \mathbb Z$
such that 
\begin{equation} \label{ExpressionForTau}
\tau  = \tau' v, \quad v =  \sum_{r=1}^j \lambda_r (e_{i_{r,1}} + \dots + e_{i_{r,d_r}})
\end{equation}
and
\[
\phi(\sigma, \tau) = \epsilon^{\sum_{r=1}^j \lambda_r (d_r - 1)}.
\]

\end{cor}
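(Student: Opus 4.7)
The plan is to obtain the decomposition from the semidirect product structure and then compute $\phi(\sigma, \tau)$ by combining both homomorphism properties (\propref{HomProp} and \corref{HomCor}) with the explicit formula in \propref{PhiForAcycle}.

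First I would use that $B_n = (\mathbb Z/2\mathbb Z)^n \rtimes S_n$ to write $\tau$ uniquely as $\tau = \tau' v$ with $\tau' \in S_n$ and $v \in (\mathbb Z/2 \mathbb Z)^n$. The condition $\sigma \tau = \tau \sigma$ translates, via the multiplication rule of the semidirect product, to $\tau' \in C_{S_n}(\sigma)$ together with $\sigma(v) = v$ (here $v$ is obtained from the straight $(\mathbb Z/2)^n$-component of $\tau$ by applying $(\tau')^{-1}$, depending on the convention). Any element of $(\mathbb Z/2\mathbb Z)^n$ fixed by $\sigma$ must be constant on each cycle of $\sigma$, which is exactly the form $v = \sum_r \lambda_r(e_{i_{r,1}}+\dots+e_{i_{r,d_r}})$ asserted; uniqueness of $(\tau', \lambda_1, \dots, \lambda_j)$ is clear from the uniqueness of the semidirect product decomposition.

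Next I would split $\phi(\sigma, \tau)$ using \propref{HomProp}: since both $\tau'$ and $v$ lie in $C_{B_n}(\sigma)$,
\[
\phi(\sigma, \tau) = \phi(\sigma, \tau')\, \phi(\sigma, v).
\]
The factor $\phi(\sigma, \tau')$ vanishes (i.e.\ equals $1$) for the following reason: $S_n$ embeds as a subgroup of $G_n = H_n \rtimes S_n$, and $\sigma, \tau' \in S_n$ commute there, so their lifts commute and the commutator is trivial. For the factor $\phi(\sigma, v)$, a further application of \propref{HomProp} gives $\phi(\sigma, v) = \prod_r \phi(\sigma, v_r)^{\lambda_r}$, where $v_r = e_{i_{r,1}}+\dots+e_{i_{r,d_r}}$.

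It remains to evaluate each $\phi(\sigma, v_r)$. Here I would invoke \corref{HomCor}: writing $\sigma = C_1 \cdots C_j$ as a product of its disjoint cycles, each $C_s$ commutes with $v_r$ (for $s \neq r$ because $C_s$ fixes every index in the support of $v_r$, and for $s = r$ because $C_r$ cyclically permutes that support, leaving the unweighted sum invariant), so
\[
\phi(\sigma, v_r) = \prod_{s=1}^{j} \phi(C_s, v_r).
\]
By the second assertion of \propref{PhiForAcycle}, the factors with $s \neq r$ are trivial, and by the first assertion, $\phi(C_r, v_r) = \epsilon^{d_r - 1}$. Multiplying everything together yields $\phi(\sigma, \tau) = \epsilon^{\sum_r \lambda_r(d_r - 1)}$, as required. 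I do not foresee a genuine obstacle: every step is an immediate consequence of the already-proved homomorphism properties together with the cycle computation, and the only minor bookkeeping concern is keeping track of the semidirect product conventions when extracting $\tau'$ and $v$ from $\tau$.
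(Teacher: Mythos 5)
Your proof is correct and follows essentially the same route as the paper: decompose $\tau = \tau'v$ via the semidirect product structure, split $\phi(\sigma,\tau)$ using \cref{HomProp}, kill $\phi(\sigma,\tau')$ by lifting both elements into $S_n \leq G_n$, and evaluate $\phi(\sigma, v_r)$ cycle by cycle via \cref{HomCor} and \cref{PhiForAcycle}. You even supply slightly more detail than the paper on why the commutation condition forces $v$ to be constant on the cycles of $\sigma$.
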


\begin{proof}

The ability to express $\tau$ as in \cref{ExpressionForTau} is immediate from the definition of the group law in $B_n$.
From \cref{HomProp}, \cref{HomCor}, and \cref{PhiForAcycle} we therefore get that
\begin{equation*}
\begin{split}
\phi(\sigma,\tau) &=  \phi \left(\sigma,  \tau' \cdot \sum_{r=1}^j \lambda_r (e_{i_{r,1}} + \dots + e_{i_{r,d_r}}) \right) = 
\phi(\sigma, \tau') \cdot \prod_{r=1}^j \phi (\sigma, e_{i_{r,1}} + \dots + e_{i_{r,d_r}})^{\lambda_r} \\
&= [\sigma, \tau'] \cdot \prod_{r=1}^j \prod_{s=1}^j
\phi(C_s, e_{i_{r,1}} + \dots + e_{i_{r,d_r}})^{\lambda_r} =
1 \cdot \prod_{r=1}^j \epsilon^{\lambda_r (d_r - 1)} = \epsilon^{\sum_{r=1}^j \lambda_r (d_r - 1)}
\end{split}
\end{equation*}
as required.
\end{proof}

We keep the notation of \cref{PhiSigmaComputCor} and denote by $O_1, \dots, O_z$ the orbits of the action by conjugation of the subgroup of $S_n$ generated by $\tau'$ on $\{C_1, \dots, C_j\}$.
For $1 \leq y \leq z$ we let $I_y \subseteq \{1, \dots, n\}$ be the set of all indices that appear in one of the cycles in $O_y$, and define the permutation $\tau'_y \in S_n$ by 
\[
\tau'_y(i) = 
\begin{cases}
\tau'(i) &i \in I_y \\
i &i \notin I_y.
\end{cases}
\]
We have a disjoint union
\[
\bigcup_{y=1}^z I_y = \{1, \dots, n\}
\]
hence $\tau' = \tau_1' \cdots \tau_z'$ and the permutations $\tau_1', \dots, \tau_z'$ commute.
We put
\[
\tau_y = \tau'_y v_y, \quad v_y = \sum_{\substack{1 \leq r \leq j \\ C_r \in O_y}} \lambda_r (e_{i_{r,1}} + \dots + e_{i_{r,d_r}})
\]
and get that 
\begin{equation} \label{CommutingProdForTauOrL}
\tau = \tau'_1 v_1 \cdots \tau'_z v_z
\end{equation}
where the factors $\tau'_1 v_1, \dots, \tau'_z v_z$ commute.

\section{Proof of \cref{MainRes}}

It suffices to show, for each $\alpha \in H^1(\widetilde M, \mathbb Z/2 \mathbb Z)$, that the pairing of the branch divisor $\mathcal D_\pi$ with $\alpha$ vanishes, namely 
\[\sum_{ \widetilde{K} \ \text{a component of} \ \widetilde{L}} (e_{\widetilde{K}}-1)  \langle[\widetilde{K}], \alpha \rangle = 0\]
 or equivalently
\[
\sum_{K \ \text{a component of} \ L} \ \sum_{ \widetilde{K} \ \text{a component of} \ \pi^{-1}(K)} (e_{\widetilde{K}}-1)  \langle[\widetilde{K}], \alpha \rangle = 0.
\]
Associated to $\alpha$ is a degree two covering space $N \to \widetilde{M}$. 
Let $n$ be the degree of $\pi \colon \widetilde{M} \to M$ which is locally constant away from ${L}$, thus constant. 
Away from $L$, we get that $N$ is a degree $2$ covering space of a degree $n$ covering space, hence has monodromy group contained in the wreath product 
\[
S_2 \wr S_n = (\mathbb Z/2 \mathbb Z) \wr S_n = (\mathbb Z/2 \mathbb Z)^n \rtimes S_n = B_n.
\]  
We thus have a map $H^2( B_n, \mathbb Z/2\mathbb Z) \to H^2 ( M \setminus L, \mathbb Z/2 \mathbb Z)$, and we denote by $\gamma \in H^2 ( M \setminus L, \mathbb Z/2 \mathbb Z)$ the image of $\beta_n$.

Consider a tubular neighborhood $Q $ of $L$ and let $S = \partial Q$ be its boundary, a union of tori. 
Each such torus $T$ corresponds to a unique component $K$ of $L$ - the boundary of a tubular neighborhood of $K$ is $T$.
Since $S$ bounds a 3-manifold in $M \setminus L$, i.e. the complement of the tubular neighborhood $Q$, our cohomology class $\gamma$ integrates to $0$ on $S$. It follows that 
\begin{equation*}
\sum_{T \ \text{a component of} \ S} \int_T \gamma = 0.
\end{equation*}
It is therefore sufficient to prove that 
\begin{equation} \label{IntegralToProve}
\int_T \gamma = \sum_{ \widetilde{K} \ \text{a component of} \ \pi^{-1}(K)} (e_{\widetilde{K}}-1)  \langle[\widetilde{K}], \alpha \rangle.
\end{equation}

Since $T$ is a torus, a covering of $T$ with monodromy $B_n$, i.e. a homomorphism from $\pi_1(T)$ to $B_n$, is given by a pair of elements $m, \ell \in B_n$ that commute, where $m$ represents a meridian and $\ell$ represents a longitude. From the standard cell decomposition of the torus, we can see that 
\begin{equation*}
\int_T \gamma = \phi(m,\ell).
\end{equation*}

Since the $\mathbb Z/2 \mathbb Z$-covering $N \to \widetilde M$ is unbranched over every component $\widetilde K$ of $\pi^{-1}(K)$, 
the monodromy of the meridian $m$ does not swap the two components of the covering, and therefore $m$ is (up to conjugation) contained in $S_n \leq B_n$.

%It follows that for $\ell \in S_n$, $[\overline{m}, \overline{\ell}] = 0$ since we can choose both lifts in $S_n$ where they still commute. 

%For $(i_1 i_2 \dots i_k)$ a cycle in $m$ and $\ell = e_{i_1} + \dots + e_{i_k}$, we can take $\overline{\ell} = x_{i_1} x_{i_1} \dots x_{i_k}$ and calculate
%\[ [\overline{m}, \overline{\ell}]  = \sigma ( x_{i_1} x_{i_1} \dots x_{i_k}) (x_{i_1} x_{i_2} \dots x_{i_k})^{-1} =  x_{\sigma(i_1)} x_{\sigma(i_2) } \dots x_{\sigma(i_k)} (x_{i_1} x_{i_2} \dots x_{i_k})^{-1} \] \[ = x_{i_2} x_{i_3} \dots x_{i_k} x_{i_1} (x_{i_1} x_{i_2} \dots x_{i_k})^{-1} = (-1)^{k-1}  x_{i_1} x_{i_2} \dots x_{i_k} (x_{i_1} x_{i_2} \dots x_{i_k})^{-1} = (-1)^{k-1} .\]

%Since every element of the centralizer  of $m $ can be written as a product of an element of $S_n$ that commutes with $m$ together with an element of $(\mathbb Z/2)^n$ that commutes with $m$ and hence is constant on each cycle of $m$, this allows us to compute $[\tilde{m}, \tilde{\ell}]$ for any element.

We shall use here the notation of \cref{PhiSigmaComputCor} and the paragraph following it for $\sigma = m$ and $\tau = \ell$, in particular we write $\ell = \ell' v$ as in \cref{ExpressionForTau}.
The components of $\pi^{-1}(K)$ are naturally in bijection with the orbits of the action by conjugation of the subgroup of $S_n$ generated by $\ell'$ on the set of disjoint cycles $\{C_1, \dots, C_j\}$ of $m$.
We denote by $O_{\widetilde K}$ the orbit corresponding to a component $\widetilde K$ of $\pi^{-1}(K)$.
As in \cref{CommutingProdForTauOrL}, we can write 
\begin{equation*} 
\ell = \prod_{\widetilde K} \ell_{\widetilde K}, \quad \ell_{\widetilde K} = \ell'_{\widetilde K} v_{\widetilde K}, \quad v_{\widetilde K} = \sum_{\substack{1 \leq r \leq j \\ C_r \in O_{\widetilde K}}} \lambda_r (e_{i_{r,1}} + \dots + e_{i_{r,d_r}}).
\end{equation*}
We denote the number of cycles in $O_{\widetilde K}$ by $t_{\widetilde K}$, note that each such cycle is of length $e_{\widetilde K}$, and set
\[
d_{\widetilde K} = \# \{1 \leq r \leq j : C_r \in O_{\widetilde K}, \ \lambda_r = 1 \}.
\]

It follows from \cref{PhiSigmaComputCor} that $\phi(m,\ell_{\widetilde K}) \equiv (e_{\widetilde K}-1)d_{\widetilde K} \mod 2$, so from \cref{HomCor} we get that
\[
\phi(m,\ell) = \sum_{\widetilde K \ \text{a component of} \ \pi^{-1}(K)} \phi(m, \ell_{\widetilde K}) \equiv \sum_{\widetilde K \ \text{a component of} \ \pi^{-1}(K)} (e_{\widetilde K}-1)d_{\widetilde K} \mod 2.
\]
It is therefore enough to show that $d_{\widetilde K} \equiv  \langle[\widetilde{K}], \alpha \rangle \mod 2$. 

Let $C$ be a longitude curve in a tubular neighborhood of $\widetilde{K}$. Then $[C] = [\widetilde{K} ]$ as homology classes in $H_1(\widetilde M, \mathbb Z / 2 \mathbb Z)$, so it suffices to show that $d_{\widetilde K} \equiv \langle [C], \alpha \rangle \mod 2$.  The projection of $[C] $ to $T$ is $a[m] + t_{\widetilde K}[\ell]$ for some $a\in \mathbb Z$.  Thus, the action of $C$ on the covering space $N \to M$ is given by $m^a \ell^{t_{\widetilde K}} $. 
We have
\[
m^a \ell^{t_{\widetilde K}}  = m^a (\ell'v)^{t_{\widetilde K}} = m^a {\ell'}^{t_{\widetilde K}} \cdot (v + \ell'(v) + \dots + \ell'^{t_{\widetilde K} - 1}(v) ).
\]

%By construction, $\ell^{t_{\widetilde K}}$ fixes each of the ${t_{\widetilde K}}$ cycles in $O_{\widetilde K}$, and thus $\ell^{t_{\widetilde K}}m^a$ fixes each element of each cycle. Thus the image of $\ell^{t_{\widetilde K}}m^a$ in $(\mathbb Z/2 \mathbb Z)^n \rtimes S_n$, restricted to $O_{\widetilde K}$, projects to the identity in $S_n$, and we must compute its $(\mathbb Z/2\mathbb Z)^n$ component. 

%We can write the image of $\ell$ in $(\mathbb Z/2)^n \rtimes S_n$  as an ordered pair $(\ell^+, \ell_\sigma)$ of $\ell^+ \in (\mathbb Z/2)^n$ and $\ell^\sigma \in S_n$. Then using the multiplication rule for the semidirect product $(\mathbb Z/2)^n \rtimes S_n$, the image of  $\ell^{t_{\widetilde K}}  $ in $(\mathbb Z/2)^n \rtimes S_n$ is given by
%\[ ( \ell^+ +  \ell_\sigma(\ell^+) + \dots + \ell_{\sigma}^{ t_{\widetilde K}-1} (\ell^+) ,  \ell_{\sigma}^{ t_{\widetilde K}} ) ,\]
%and the image of $\ell^\sigma  m^a$ is \[( \ell^+ +  \ell_\sigma(\ell^+) + \dots + \ell_{\sigma}^{ t_{\widetilde K}-1} (\ell^+) ,  \ell_{\sigma}^{ t_{\widetilde K}} m_\sigma^a ) .\]

The pairing $\langle [C], \alpha \rangle$ is nonzero if and only if the monodromy along $C$ of the covering $N \to \widetilde M$ is nontrivial, which happens if and only if the action of $m^a \ell^{t_{\widetilde K}}$ sends one branch of this covering to the other, and that occurs if and only if the $k$th entry of $v + \ell'(v) + \dots + \ell'^{t_{\widetilde K} - 1}(v)$ is nonzero for some (equivalently, every) index $1 \leq k \leq n$ that belongs to one of the cycles in $O_{\widetilde K}$. It is therefore sufficient to show that
\[ d_{\widetilde K} \equiv ( v + \ell'(v) + \dots + \ell'^{t_{\widetilde K} - 1}(v) ) _k \mod 2.\]

We have
\[ ( v + \ell'(v) + \dots + \ell'^{t_{\widetilde K} - 1}(v) ) _k = 
v_k + \ell'(v)_k + \dots + \ell'^{ t_{\widetilde K}-1} (v)_k= v_k + v_{ \ell'^{-1} (k) } + \dots +  v_{ \ell'^{ - t_{\widetilde{K}}+1}(k)}.\]
By the orbit-stabilizer theorem, each of the $t_{\widetilde K}$ cycles in $O_{\widetilde K}$ contains exactly one of the $t_{\widetilde K}$ elements $k, \ell'^{-1} (k), \dots, \ell'^{- t_{\widetilde{K}}+1 }(k)$.  
Thus, from \cref{ExpressionForTau} we get that 
\[ v_k + v_{ \ell'^{-1} (k) } + \dots +  v_{ \ell'^{ - t_{\widetilde{K}}+1}(k)}= \sum_{\substack{ 1 \leq r \leq j \\ C_r \in O_{\widetilde K}}} \lambda_r \equiv  d_{\widetilde K} \mod 2,\] as desired.

% $(\mathbb Z/2)^n We're interested in its $(\mathbb Z/2\mathbb Z)^n$ component. As we follow each of the $t_{\widetilde K}e_{\widetilde K}$ elements of the orbit $O_{\widetilde K}$ under $e_{\widetilde K}$ applications of $\ell$, it will pass through each of the cycles once, and hence will pass through an index where the $(\mathbb Z/2\mathbb Z)^n$ component of $\ell$ is equal to $1$ instead of $0$ exactly $d_{\widetilde K}$ times. Adding these up, we see that $\ell^{t_{\widetilde K}}$ takes the value $d_{\widetilde K}$ (mod $2$) on each of the $t_{\widetilde K}e_{\widetilde K}$ elements of the orbit, and the same is true after multiplication by an arbitrary power of $m$. So this longitude curve has nontrivial monodromy in the covering associated to $\alpha$ if and only if $d_{\widetilde K}$ is nonzero mod $2$, meaning that $d_{\widetilde K} =  \langle[\widetilde{K}], \alpha \rangle$, as desired.


\begin{thebibliography}{50}

\bibitem[Arm67]{Arm} J. V. Armitage, \emph{On a Theorem of Hecke in Number Fields and Function Fields}, Inventiones math. 2, 238--246 (1967).

\bibitem[Fro78]{Fro} A. Fr\"ohlich, \emph{On parity problems}, S\'eminaire de Th\'eorie des Nombres de Bordeaux (1978): 1-8.	

\bibitem[FST62]{FST} A. Fr\"ohlich, J-P. Serre, J. Tate, \emph{A different with an odd class}, J. f\"ur die reine u. angew. Math. 209, 6--7 (1962).

\bibitem[Mor12]{Mori} M. Morishita, \emph{Knots and Primes}, An introduction to arithmetic topology, Universitext. Springer, London, 2012.

\bibitem[Ser79]{Serre} J-P. Serre, \emph{Local Fields}, GTM 67, Springer-Verlag, 1979.

\end{thebibliography}
\end{document}